\theoremstyle{plain}
\newtheorem{thm}{Theorem}
\newtheorem{lem}[thm]{Lemma}
\newtheorem{prop}[thm]{Proposition}
\theoremstyle{definition}
\newtheorem*{rem}{Remark}
\newcommand{\Z}{\mathbb{Z}}
\def\a{\alpha}
\def\b{\beta}
\def\e{\varepsilon}
\def\g{\gamma}
\def\s{\sigma}
\begin{document}

\title{A shadow Markov equation}

\author{Nathan Bonin}
\address{
Nathan Bonin,
Laboratoire de Math\'ematiques de Reims, UMR9008 CNRS,
Universit\'e de Reims Champagne-Ardenne,
U.F.R. Sciences Exactes et Naturelles,
Moulin de la Housse - BP 1039,
51687 Reims cedex 2,
France}
\email{nathan.bonin1@etudiant.univ-reims.fr}

\author{Valentin Ovsienko}
\address{
Valentin Ovsienko,
CNRS,
Laboratoire de Math\'ematiques de Reims, UMR9008 CNRS,
Universit\'e de Reims Champagne-Ardenne,
U.F.R. Sciences Exactes et Naturelles,
Moulin de la Housse - BP 1039,
51687 Reims cedex 2,
France}
\email{valentin.ovsienko@univ-reims.fr}

\begin{abstract}
We introduce an analogue of the classical Markov equation that involves dual numbers $a+\a\e$ with $\e^2=0$.
This equation characterizes the ``shadow Markov numbers'' recently considered by one of us.
We show that this equation is characterized by invariance by cluster algebra mutations.
\end{abstract}

\maketitle

\thispagestyle{empty}

\section{Introduction}

The classical Markov (or Markoff) equation~\cite{Mar} is the Diophantine equation
$$
a^2+b^2+c^2=3abc.
$$
A positive integer $a$ is called a {\it Markov number} if it can be completed to a triplet
of positive integers $(a,b,c)$ satisfying the Markov equation.
Markov numbers attracted much interest in many branches of mathematics,
from number theory to topology, combinatorics, and mathematical physics.

In this note we will consider pairs of positive integers, $(a,\a)\in\Z\times\Z$, that will be written in a form
\begin{equation}
\label{DualEq}
A=a+\a\e,
\end{equation}
where $\e$ is a formal parameter such that $\e^2=0$.
Formal sums~\eqref{DualEq} are called {\it dual numbers}.
The notion of dual number goes back to Clifford and is used in
various situations in algebra, geometry, and mathematical physics.
A dual number~\eqref{DualEq} is called a {\it positive integer} if $a,\a\in\Z_{>0}$.

Our goal is to introduce and study a deformation of the Markov equation
\begin{equation}
\label{SMarEq}
A^2+B^2+C^2=\left(3-2\e\right)ABC.
\end{equation}
Solutions $(A,B,C)$ to this equation are triples of dual numbers.
We call a {\it dual Markov number} a positive integer dual number~$A$  that can be completed to a triplet $(A,B,C)$ 
of positive integer dual numbers satisfying~\eqref{SMarEq}.

The following triplets of dual numbers
\begin{eqnarray*}
(1,& 1+\e, & 1+\e),\\
(1+\e, & 1+\e,& 2+4\e),\\
( 1+\e,& 2+4\e,& 5+13\e)
\end{eqnarray*}
are examples of triplets of dual Markov numbers.

We study several properties of the deformed Markov equation~\eqref{SMarEq}.
The main property is that it is preserved by cluster algebra mutations, similarly to the classical case~\cite{Pro}.
We will show that it is characterized by this mutation invariance.

The notion of ``shadow'' sequences of integers appeared in~\cite{Ovs}, and was tested on
the sequence of Markov numbers.
Every Markov number, $a$, is accompanied by another positive integer, $\a$, 
called the shadow of $a$.
For a definition, see~\cite{Ovs} and Section~\ref{MSec}.
It is {\it a priori} not clear, and was asked in~\cite{Ovs}, if the sequence of shadows of Markov numbers satisfy any equation.
It turns out that the classical Markov numbers organized together with their shadows in the form of dual numbers
are solutions to~\eqref{SMarEq}.

\begin{thm}
\label{MainThm}
Given a Markov number $a$ and its shadow $\a$, the corresponding dual number $A=a+\a\e$ is a dual Markov number.
\end{thm}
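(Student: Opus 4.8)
The plan is to separate the dual equation into its real and nilpotent parts and then propagate the resulting scalar identity along the Markov tree by a Vieta-type mutation.

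First I would write $B=b+\b\e$ and $C=c+\g\e$ and, using $\e^2=0$, expand both sides of~\eqref{SMarEq}. The coefficient of $1$ recovers the classical Markov equation $a^2+b^2+c^2=3abc$, which holds because $(a,b,c)$ is by hypothesis a Markov triple. The coefficient of $\e$ yields the single scalar condition
\[
2(a\a+b\b+c\g)=3(bc\,\a+ca\,\b+ab\,\g)-2abc,
\]
which I will call the shadow equation. Thus, to prove that $A=a+\a\e$ is a dual Markov number it suffices to complete $a$ to a Markov triple $(a,b,c)$, attach to $b$ and $c$ their shadows $\b,\g$, and verify this single linear relation; positivity of $(B,C)$ is automatic since Markov numbers and their shadows are positive integers.

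Next I would exploit that~\eqref{SMarEq} is quadratic in each of $A,B,C$ over the ring of dual numbers. Fixing $A,B$, Vieta's formulas for the quadratic $C^2-(3-2\e)AB\,C+(A^2+B^2)=0$ show that $(A,B,C')$ is again a solution, where $C'=(3-2\e)AB-C$. Expanding, this one dual mutation decomposes into the classical Markov mutation $c'=3ab-c$ on the real parts together with the shadow mutation $\g'=3(a\b+b\a)-2ab-\g$ on the nilpotent parts. I would then argue by induction along the Markov tree: one checks directly that the base triple $(1,\,1+\e,\,1+\e)$ satisfies~\eqref{SMarEq}, and Vieta's mutation preserves~\eqref{SMarEq} at every node, so the shadow equation holds throughout the tree provided the $\e$-components of the triples are exactly the shadows.

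The crux, and the step I expect to be the main obstacle, is therefore to reconcile the two descriptions of the shadow: one must show that the recursion $\g'=3(a\b+b\a)-2ab-\g$ governing the $\e$-parts above coincides with the rule generating the shadow sequence in the sense of~\cite{Ovs} (Section~\ref{MSec}). Since the shadows there are produced by an independent construction, I would derive their behaviour under a Markov mutation and match it, term by term, to the dual mutation rule, checking agreement at the root of the tree. Granting this identification, induction over the Markov tree immediately gives the shadow equation for every Markov triple, and hence the theorem.
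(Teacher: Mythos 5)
Your proposal is correct and is essentially the paper's own argument: both are Vieta-jumping proofs of mutation invariance, and your observation that $(A,B,C')$ with $C'=(3-2\e)AB-C$ is again a solution is precisely the paper's Lemma~\ref{StabLem} (there stated with a formal coefficient $X$ in place of $3-2\e$, via the rewriting~\eqref{A'X}). The step you single out as the main obstacle is in fact immediate: the shadows of~\cite{Ovs} are \emph{defined} by the recursion~\eqref{SMarmut} (equivalently~\eqref{SMarmutBis}), and on solutions of the equation this rational mutation coincides with your Vieta mutation, so your induction along the tree closes without further work.
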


We do not have a classification of dual Markov numbers.
A classical theorem Markov implies that every solution $(A,B,C)$ to~\eqref{SMarEq} can be obtained
by a sequence of transformations called {\it mutations} (see Section~\ref{MSec}) from a triplet of the form
$$
A'=1+\a'\e,\qquad
B'= 1+\b'\e,\qquad
C'=1+\g'\e,
$$
with some $\a',\b',\g'\in\Z$, but it is an open problem to determine the conditions that guarantee positivity of $\a,\b,\g$.

Another open problem is to understand whether the equation~\eqref{SMarEq} is connected to supergeometry.
We believe that~\eqref{SMarEq} should be considered as a ``superanalogue'' of the classical Markov equation
and is related to the supergroup $\mathrm{OSp}(1|2,\Z)$; see~\cite{CO}.
However, we were unable to prove this so far.
One also notices a certain resemblance of~\eqref{SMarEq} 
with the super-Markov equation of~\cite{HPZ}; see Section~\ref{InMutS}.

Although the proofs of the statements of this note are rather elementary,
we hope that equation~\eqref{SMarEq} can be useful for what is sometimes called
in mathematical physics ``super number theory''; see~\cite{Rab,CO}.
The idea of shadow integer sequences recently initiated in~\cite{OT} and~\cite{Ovs}
was further developed in~\cite{Hon,Ves,CO} and generated various unexpected connections.

\section{Markov numbers and their shadows}\label{MSec}

In this section we briefly explain the connection of Markov numbers
with Fomin-Zelevinsky cluster algebras~\cite{FZ1},
that was proposed in~\cite{Pro}.
We also recall the construction of the shadow Markov sequence
introduced in~\cite{Ovs}.

\subsection{Markov numbers and quiver mutations}

It is well-known and due to Markov~\cite{Mar}, that every positive integer solution to the Markov equation
can be obtained from $(1,1,1)$ by a sequence of transformations
$(a,b,c)\mapsto(a',b,c)$, where
\begin{equation}
\label{Marmut}
a'=\frac{b^2+c^2}{a},
\end{equation}
and permutations of $a,b,$ and $c$.
It is easy to check that $(a',b,c)$ remains a solution if $(a,b,c)$ is a solution.

It was noticed in~\cite{Pro}, that the maps~\eqref{Marmut} can be understood as instances of {\it cluster algebra mutations}
 in the sense of Fomin-Zelevinsky cluster algebra~\cite{FZ1}.
The quiver associated with the sequence of Markov numbers is as follows:
$$
\xymatrix{
b\ar@<2pt>@{->}[rd]\ar@<-2pt>@{->}[rd]\ar@<-2pt>@{<-}[d]\ar@<2pt>@{<-}[d]&\\
a&c\ar@<-2pt>@{->}[l]\ar@<2pt>@{->}[l]
}
$$
and~\eqref{Marmut} are precisely the exchange relations for the  mutations associated to this quiver.

Integrality of $a'$ in~\eqref{Marmut} follows from the Laurent phenomemon~\cite{FZ},
and can also be obtained directly from an equivalent 
(and more frequently used) form of~\eqref{Marmut}:
$a'=3bc-a$.

\subsection{The construction of~\cite{Ovs}}

The notion of shadow Markov sequence~\cite{Ovs} is based on transformations~\eqref{Marmut}.
The shadow sequence is constructed recurrently without any equation on it.

Choose the initial triplet of dual numbers $(A_1,B_1,C_1)$ with
\begin{equation}
\label{InitD}
A_1=1+\a_1\e,\qquad
B_1= 1+\b_1\e,\qquad
C_1=1+\g_1\e,
\end{equation}
where $\a_1,\b_1,\g_1$ are arbitrary integers.
Define an infinite sequence of transformations $(A,B,C)\mapsto(A',B,C)$
by the same formula as~\eqref{Marmut}:
\begin{equation}
\label{SMarmut}
A':=\frac{B^2+C^2}{A}.
\end{equation}
More explicitly~\eqref{SMarmut} reads
\begin{equation}
\label{SMarmutBis}
a'=\frac{b^2+c^2}{a},
\qquad\qquad
\a'=\frac{2b\,\b+2c\,\g-a'\a}{a}
\end{equation}
and therefore determines both, the real and nilpotent parts of $A'=a'+\a'\e$.
Furthermore, $\a'$ will remain integer, which follows from the version of
Laurent phenomenon proved in~\cite{OZ}.

\subsection{The Markov tree and its shadow}
Similarly to the classical Markov numbers,
their shadows can be organized with the help of an infinite binary tree.
The version that we use is the following.

The binary tree is drawn in the plane cutting it into infinitely many regions,
and every region is labeled by a Markov number.
Locally the picture is this
$$
\xymatrix @!0 @R=0.3cm @C=0.3cm
{
&&&\ar@{-}[ddd]\\
\\
&A&&&&B\\
&&&\bullet\ar@{-}[llldd]\ar@{-}[rrrdd]\\
\\
&&&C&&&
}
$$
and the transformations~\eqref{SMarmut} correspond to the following branchings:
$$
\xymatrix @!0 @R=0.3cm @C=0.3cm
{
\ar@{-}[rrdd]&&&&&&&&\\
&&&&B\\
A&&\bullet\ar@{-}[lldd]\ar@{-}[rrrr]&&&&
\bullet\ar@{-}[rruu]\ar@{-}[rrdd]&&A'\\
&&&&C\\
&&&&&&&&
}
$$

The most natural choice of the initial values $(\a_1,\b_1,\g_1)$ is
\begin{equation}
\label{InitGooD}
(\a_1,\b_1,\g_1)=(0,1,1).
\end{equation}
For details; see~\cite{Ovs} and Section~\ref{ICSec}.
The tree of Markov numbers with the initial triplet~\eqref{InitGooD} is presented below together with their shadow:
$$
\begin{small}
\xymatrix @!0 @R=0.38cm @C=0.38cm
{&&&&&&&&&&&&\\
&&&&&&&&&&{\textcolor{red}{1}}&{\textcolor{blue}{0}}
&&\bullet\ar@{-}[lld]\ar@{-}[lu]&&{\textcolor{red}{1}}&{\textcolor{blue}{1}}&&\\
&&&&&&&&&&&&&&&&\bullet\ar@{-}[dd]\ar@{-}[rru]\ar@{-}[lllu]\\
&&&&&&&&&&&&&&{\textcolor{red}{1}}&{\textcolor{blue}{1}}&&{\textcolor{red}{2}}&{\textcolor{blue}{4}}\\
&&&&&&&&&&&&&&&&\bullet\ar@{-}[lllllllldd]\ar@{-}[rrrrrrrrdd]&&&&&&&&\\
&&&&&&&&&&&&&&&&\\
&&&&&&&&\bullet\ar@{-}[lllldd]\ar@{-}[rrrrdd]
&&&&&&&{\textcolor{red}{5}}&&{\textcolor{blue}{13}}
&&&&&&&\bullet\ar@{-}[lllldd]\ar@{-}[rrrrdd]&&&\\
&&&&&&&&
&&&&&&&&&&&&&&&&\\
&&&&\bullet\ar@{-}[lldd]\ar@{-}[rrdd]
&&&{\textcolor{red}{13}}&&{\textcolor{blue}{40}}&&&\bullet\ar@{-}[lldd]\ar@{-}[rrdd]
&&&&&&&&\bullet\ar@{-}[lldd]\ar@{-}[rrdd]
&&&{\textcolor{red}{29}}&&{\textcolor{blue}{117}}&&&\bullet\ar@{-}[lldd]\ar@{-}[rrdd]\\
&&&&&&&&&&&&&&&&&&&&&&&&&&&&\\
&&\bullet\ar@{-}[ldd]\ar@{-}[rdd]
&&{\textcolor{red}{34}}&&\bullet\ar@{-}[ldd]\ar@{-}[rdd]
&&&&\bullet\ar@{-}[ldd]\ar@{-}[rdd]
&&{\textcolor{red}{194}}&&\bullet\ar@{-}[ldd]\ar@{-}[rdd]
&&&&\bullet\ar@{-}[ldd]\ar@{-}[rdd]
&&{\textcolor{red}{433}}&&\bullet\ar@{-}[ldd]\ar@{-}[rdd]
&&&&\bullet\ar@{-}[ldd]\ar@{-}[rdd]
&&{\textcolor{red}{169}}&&\bullet\ar@{-}[ldd]\ar@{-}[rdd]\\
&&&&{\textcolor{blue}{120}}&&&&&&&&{\textcolor{blue}{976}}&&&&&&&&{\textcolor{blue}{2592}}&&
&&&&&&{\textcolor{blue}{921}}&&&&&\\
&&{\textcolor{red}{\scriptstyle89}}
&&&&{\textcolor{red}{\scriptstyle1325}}
&&&&{\textcolor{red}{\scriptstyle7561}}
&&&&{\textcolor{red}{\scriptstyle2897}}
&&&&{\textcolor{red}{\scriptstyle6466}}
&&&&{\textcolor{red}{\scriptstyle37666}}
&&&&{\textcolor{red}{\scriptstyle14701}}
&&&&{\textcolor{red}{\scriptstyle985}}&&\\
&&{\textcolor{blue}{\scriptstyle354}}
&&&&{\textcolor{blue}{\scriptstyle7875}}
&&&&{\textcolor{blue}{\scriptstyle56287}}
&&&&{\textcolor{blue}{\scriptstyle20226}}
&&&&{\textcolor{blue}{\scriptstyle51320}}
&&&&{\textcolor{blue}{\scriptstyle352360}}
&&&&{\textcolor{blue}{\scriptstyle129640}}
&&&&{\textcolor{blue}{\scriptstyle6761}}
\\
&&&&\ldots&&&&&&&&&&&&\ldots&&&&&&&&&&&&\ldots
}
\end{small}$$

\begin{rem}
The simplest subsequence of Markov numbers (left branch of the above tree) 
contains the odd Fibonacci numbers $1,2,5,13,34,89,\ldots$.
It was noticed in~\cite{Ovs}, that the sequence 
$$
1,4,13,40,120,354,1031,2972, 8495,\ldots
$$
appearing as the shadow of the odd Fibonacci branch,
is the sequencel A238846 (see~\cite{OEIS}) which is the convolution of two
bisections of the Fibonacci sequence, $F_{2n+1}$ and $F_{2n}$.
We refer to~\cite{MOZ,Ovs} for various $\e$-deformations of Fibonacci numbers.

The combinatorial nature of other shadow sequence appearing in the above tree
is not understood.
For instance, it would be interesting to know properties of the sequence
$1,13,117,921,6761,\ldots$
appearing as the shadow of
$1, 5, 29,169, 985,\ldots$ which is a bisection of the classical Pell numbers.
\end{rem}

\section{Mutation stability}

In this section, we relax the positivity condition for the nilpotent part
$\a$ of dual numbers $A=a+\a\e$.
We consider an arbitrary choice of the initial triplet~\eqref{InitD}
and give a slightly more general form of the deformed Markov equation.
We also obtain several properties of this equation.

\subsection{Proof of Theorem~\ref{MainThm}}

The following statement is a more precise and general reformulation of Theorem~\ref{MainThm}.

\begin{prop}
\label{Refor}
(i)
Every triplet of dual numbers $(A,B,C)$ obtained from~\eqref{InitD} by
a sequence of transformations~\eqref{SMarmut} satisfies the equation
\begin{equation}
\label{GenE}
A^2+B^2+C^2=\left(3-\s\e\right)ABC,
\end{equation}
where $\s=\a_1+\b_1+\g_1$.

(ii)
Conversely, every triplet of solutions $(A,B,C)$ to~\eqref{GenE} with positive $a,b,c$
can be obtained from a triple of the form~\eqref{InitD} by a sequence of transformations~\eqref{SMarmut}
mixed with permutations of $A,B$ and~$C$.
\end{prop}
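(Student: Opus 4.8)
The plan is to prove the two parts separately, exploiting the fact that the equation~\eqref{GenE} lives in dual numbers and therefore splits into its real part and its nilpotent ($\e$-)part. Writing $A=a+\a\e$ and similarly for $B,C$, I would first expand both sides of~\eqref{GenE}. Since $\e^2=0$, we have $A^2=a^2+2a\a\e$, and the product $ABC=abc+(\a bc+a\b c+ab\g)\e$. Thus~\eqref{GenE} is equivalent to the pair of scalar equations
\begin{equation}
\label{RealPart}
a^2+b^2+c^2=3abc
\end{equation}
for the real part, together with
\begin{equation}
\label{NilPart}
2a\a+2b\b+2c\g=3(\a bc+a\b c+ab\g)-\s\,abc
\end{equation}
for the nilpotent part. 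The real part is just the classical Markov equation, so its invariance under~\eqref{Marmut} is already known; the whole content of the statement is really about the nilpotent equation~\eqref{NilPart}.

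For part (i), I would argue by induction on the number of transformations~\eqref{SMarmut}. The base case is the initial triplet~\eqref{InitD} with $a=b=c=1$: here~\eqref{RealPart} reads $3=3$, and~\eqref{NilPart} reads $2(\a_1+\b_1+\g_1)=3(\a_1+\b_1+\g_1)-\s$, which holds precisely because $\s=\a_1+\b_1+\g_1$. For the inductive step I would verify that a single mutation $(A,B,C)\mapsto(A',B,C)$ defined by~\eqref{SMarmut} preserves~\eqref{GenE}. By symmetry it suffices to mutate the first entry. The real part is classical. For the nilpotent part, I would substitute the explicit formula for $\a'$ from~\eqref{SMarmutBis}, namely $\a'=(2b\b+2c\g-a'\a)/a$, into the transformed equation~\eqref{NilPart} and check the identity holds, using the real Markov relation $a'=3bc-a$ (equivalently $a a'=b^2+c^2$) to clear denominators. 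This is a direct, if slightly tedious, algebraic verification; I expect it to reduce to a polynomial identity that becomes transparent once $\s$ is eliminated via the definition of $\a'$.

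For part (ii), I would leverage the classical Markov theorem (stated in Section~\ref{MSec}) applied to the real parts. Given a solution $(A,B,C)$ to~\eqref{GenE} with positive $a,b,c$, the real triple $(a,b,c)$ solves~\eqref{RealPart}, so by Markov's theorem there is a sequence of mutations~\eqref{Marmut} and permutations reducing $(a,b,c)$ to $(1,1,1)$. The key observation is that since $\a'$ is \emph{uniquely determined} by~\eqref{SMarmutBis} once the other data are fixed, the real mutations lift canonically to dual-number mutations~\eqref{SMarmut}, and by part (i) this lifted sequence carries the nilpotent parts along so that~\eqref{GenE} is preserved at every step. Applying the reducing sequence therefore brings $(A,B,C)$ to a triple with real part $(1,1,1)$, i.e.\ of the form $1+\a'\e,\,1+\b'\e,\,1+\g'\e$, which is exactly~\eqref{InitD}.

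The main obstacle is the nilpotent verification in the inductive step of part (i): one must confirm that the seemingly ad hoc formula for $\a'$ in~\eqref{SMarmutBis} is exactly the value that makes~\eqref{NilPart} invariant, and that the parameter $\s=\a_1+\b_1+\g_1$ is a genuine invariant of the mutation process rather than merely an initial artifact. A clean way to handle this, which I would prefer over brute expansion, is to treat~\eqref{GenE} as a function on dual-number triples and show directly that the quantity $A^2+B^2+C^2-3ABC$ divided by $ABC$ (in the dual-number sense) is mutation-invariant; then~\eqref{SMarmut} is forced to preserve the whole equation once the real part is fixed, and the value of $\s$ is read off from the initial triplet. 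This recasts the tedious computation as an invariance principle and explains conceptually why the $\e$-linear term is governed by the single constant $\s$.
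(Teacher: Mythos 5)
Your proposal is correct, and it actually contains two routes: the ``clean way'' you say you prefer at the end coincides with the paper's own proof, while your primary route is genuinely different. The paper never splits into real and nilpotent parts for the invariance step: its key lemma states that the equation $A^2+B^2+C^2=X\,ABC$, with $X$ a \emph{formal parameter}, is preserved by~\eqref{SMarmut}; the proof is two lines (use the equation to rewrite $A'=X\,BC-A$, substitute, and both sides become $X^2B^2C^2-X\,ABC$), valid in any commutative ring, and it then forces $X=(A_1^2+B_1^2+C_1^2)/(A_1B_1C_1)=3-\s\e$, which is how $\s$ gets pinned down from the initial triplet. That is exactly your principle that $(A^2+B^2+C^2)/(ABC)$ is mutation-invariant, made rigorous without ever dividing by a dual number. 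Your primary route --- splitting~\eqref{GenE} into the classical Markov equation on $(a,b,c)$ plus the scalar shadow equation $2a\a+2b\b+2c\g=3(\a bc+a\b c+ab\g)-\s\,abc$ and inducting --- also works, and it has the merit of exhibiting explicitly the linear equation satisfied by the shadows (a linearity the paper only exploits later, in Section~\ref{ICSec}); but as written it has a soft spot: the inductive step, which in that formulation is the entire content of part (i), is only asserted (``I expect it to reduce to a polynomial identity''), and that tedious verification is precisely what the formal-parameter computation replaces. Part (ii) is handled the same way in both treatments: reduce the real parts to $(1,1,1)$ by the classical Markov theorem and follow the same sequence of mutations~\eqref{SMarmut} on the dual numbers, both you and the paper implicitly using that the mutations are involutions so that the reduction can be read backwards as generation from a triple of the form~\eqref{InitD}.
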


\begin{proof}
To prove Part (i),
let us first check that the initial triplet~\eqref{InitD} satisfies~\eqref{GenE}.
Indeed, substituting~\eqref{InitD} to the left-hand-side of the equation gives
$3+2(\a_1+\b_1+\g_1)\e$, while in the right-hand-side one obtains
$3+3(\a_1+\b_1+\g_1)\e-\s\e$.

Let us check that~\eqref{GenE} is stable with respect to the transformations~\eqref{SMarmut}.

\begin{lem}
\label{StabLem}
The equation
\begin{equation}
\label{GenEX}
A^2+B^2+C^2=X\,ABC,
\end{equation}
where~$X$ is a formal parameter, is preserved by the transformations~\eqref{SMarmut}.
\end{lem}

\noindent{\it Proof of the lemma}.
Let us show that, if $(A,B,C)$ is a solution to~\eqref{GenEX},
then $(A',B,C)$, with $A'$ as in~\eqref{SMarmut}, is also a solution to the same equation.

Assume that $(A,B,C)$ is a solution to~\eqref{GenEX}.
Then $A'$ given by~\eqref{SMarmut} can be rewritten:
\begin{equation}
\label{A'X}
A'=X\,BC - A.
\end{equation}
Substituting this expression for $A'$ to the left-hand-side of~\eqref{GenEX}, one obtains
\begin{eqnarray*}
{A'}^2+B^2+C^2&=&
X^2\,B^2C^2 - 2X\,ABC+A^2+B^2+C^2\\
&=&
X^2\,B^2C^2 - X\,ABC,
\end{eqnarray*}
while in the right-hand-side, one has
\begin{eqnarray*}
X\,A'BC &=& X\,\left(X\,BC-A\right)BC\\
&=&
X^2\,B^2C^2 - X\,ABC.
\end{eqnarray*}
We conclude that the triplet $(A',B,C)$ satisfies exactly the same equation, namely
${A'}^2+B^2+C^2=X\,A'BC$.
\qed

\medskip

The assumption that the initial triplet~$(A_1,B_1,C_1)$ satisfies~\eqref{GenEX} implies that
$$
X=\frac{A_1^2+B_1^2+C_1^2}{A_1B_1C_1},
$$
that gives $X=3-(\a_1+\b_1+\g_1)\e$.

Proposition~\ref{Refor}, Part (i) follows.

To prove Part (ii), we use the classical theorem that any positive solution $(a,b,c)$ to the Markov equation
can be reduced to $(1,1,1)$ by a sequence of transformations~\eqref{Marmut}.
We conclude following the same sequence of transformations~\eqref{SMarmut}.
\end{proof}

\subsection{Uniqueness}\label{InMutS}

Let us show that~\eqref{GenE} is the only $\e$-deformation of the Markov equation
which is preserved by the mutations~\eqref{SMarmut}.

Consider the most general $\e$-deformed Markov equation, it can be written in the form
\begin{equation}
\label{DefEq}
A^2+B^2+C^2=3ABC+P(A,B,C)\e,
\end{equation}
where $P$ is an arbitrary polynomial
$$
P(A,B,C)=\sum_{i,j,k\geq0}P_{ijk}A^iB^jC^k.
$$

\begin{thm}
\label{InvThm}
Equation~\eqref{GenE} is the only equation among the family~\eqref{DefEq}
preserved by the transformations~\eqref{SMarmut}.
\end{thm}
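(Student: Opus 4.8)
The plan is to show that requiring invariance under the mutation maps forces the deformation term to have a very specific form, namely $P(A,B,C) = -\sigma ABC$ for some constant $\sigma$, which recovers~\eqref{GenE}.

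The key idea: I would write out explicitly what mutation-invariance demands. Suppose $(A,B,C)$ satisfies~\eqref{DefEq}, and apply the mutation $A' = \frac{B^2+C^2}{A}$. Working modulo $\e^2=0$, I split every dual number into its real and nilpotent parts. The real parts $(a,b,c)$ satisfy the classical Markov equation and its mutation is classical, so the constraint lives entirely in the nilpotent ($\e$) part. The plan is to substitute $A' = 3BC - A + \frac{P(A,B,C)}{ABC}\cdot(\text{correction})\,\e$ — more carefully, from~\eqref{DefEq} one solves for $A'$ and expands to first order in $\e$. Demanding that the mutated triplet $(A',B,C)$ again satisfy~\eqref{DefEq} produces a functional equation relating $P(A',B,C)$ to $P(A,B,C)$ along every mutation, restricted to the classical Markov locus $a^2+b^2+c^2=3abc$.

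\textbf{The main obstacle} will be turning this pointwise functional constraint into a statement about the \emph{polynomial} $P$. The invariance condition only needs to hold on triplets reachable by mutation from~\eqref{InitD}, i.e.\ on a Zariski-dense-enough subset of the Markov variety, not on all of $\mathbb{A}^3$. So I would proceed in two stages. First, I would show that along a single mutation the nilpotent increment is governed by the derivative relation coming from~\eqref{SMarmutBis}, namely $\a' = \frac{2b\beta + 2c\gamma - a'\a}{a}$, and compare this with the increment that~\eqref{DefEq} would independently force through its $\e$-part; consistency for \emph{all} choices of initial nilpotent data $(\a,\b,\g)$ (which are free integer parameters by~\eqref{InitD}) is a strong linear condition. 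Since $\a,\b,\g$ can be varied independently while $(a,b,c)$ stays fixed at a Markov triple, I can differentiate the invariance identity with respect to these free parameters; the terms linear in $\a,\b,\g$ must match separately from the $\e$-independent terms. This should already pin down that $P$ restricted to the Markov locus equals $-\sigma\cdot abc$ with $\sigma = \a+\b+\g$ constant along mutations, using that $\a+\b+\g$ is a mutation invariant (visible from summing the three Markov-type relations, or directly from Part~(i) of Proposition~\ref{Refor}).

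Finally, I would argue from ``$P$ agrees with $-\sigma ABC$ on a large mutation orbit'' to ``$P = -\sigma ABC$ as polynomials.'' Here I would invoke that the set of positive Markov triples is infinite and the mutation orbit is Zariski dense in the Markov surface, so two polynomials agreeing there agree on the surface; combined with the fact that the only admissible $P$ producing a \emph{well-defined deformation preserved by all permutations and mutations} must be symmetric and of the totally-multiplicative form, this forces $P(A,B,C)=-\sigma ABC$ exactly. The upshot is that~\eqref{DefEq} collapses to~\eqref{GenE}, as claimed. I expect the genuinely delicate point to be controlling the behaviour of $P$ \emph{off} the Markov locus — strictly speaking the hypothesis constrains $P$ only on solutions — so the cleanest formulation may require testing invariance on a sufficiently rich finite collection of explicit triplets (e.g.\ the seeds listed in the introduction together with a few of their mutations) to determine the finitely many relevant coefficients $P_{ijk}$ and then checking these force the monomial $ABC$.
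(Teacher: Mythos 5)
Your opening move --- solving \eqref{DefEq} for $A'$ to first order in $\e$ and extracting a functional equation relating $P(A',B,C)$ to $P(A,B,C)$ --- coincides with the paper's proof. But the way you propose to handle what you call the main obstacle is where the argument breaks down, and the obstacle is of your own making: it comes from reading ``preserved by the transformations~\eqref{SMarmut}'' as a condition on solution sets only. Under that reading the theorem is actually \emph{false}, so no density or finite-testing argument can close the gap. Concretely, let $M(A,B,C)=A^2+B^2+C^2-3ABC$ and replace $P=-\sigma ABC$ by $P'=-\sigma ABC+Q\cdot M$ for any nonzero polynomial $Q$. A dual-number triplet solves~\eqref{DefEq} if and only if its real part $(a,b,c)$ satisfies $M(a,b,c)=0$ and its nilpotent part satisfies a linear equation whose inhomogeneous term is $P(a,b,c)$; since $P'$ and $-\sigma ABC$ agree on the surface $M=0$, the two equations have \emph{identical} solution sets, so the equation with $P'$ is preserved in your sense, yet $P'\neq-\sigma ABC$ as polynomials. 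This is exactly why your final step --- upgrading agreement on the (closure of the) mutation orbit to the polynomial identity $P=-\sigma ABC$ --- cannot succeed: restriction to the Markov locus determines $P$ only modulo the ideal generated by $M$, and symmetry or testing on explicit seeds cannot do better.

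The paper avoids this entirely by interpreting preservation \emph{formally}: $A,B,C$ are independent variables, the equation is used as a substitution rule to write $A'=3BC-A+\frac{P(A,B,C)}{A}\,\e$, and invariance is required to hold as an identity of rational functions. This yields
$$
\sum_{i,j,k\geq0}P_{ijk}\left(3BC-A\right)^iB^jC^k=\left(\frac{3BC}{A}-1\right)\sum_{i,j,k\geq0}P_{ijk}A^iB^jC^k
$$
identically in $A,B,C$, after which a leading-term argument (take the maximal $i$ with $P_{ijk}\neq0$; if $i\geq2$, the left-hand side contains the $A$-free monomial $3^iP_{ijk}B^{i+j}C^{i+k}$, which cannot be matched on the right) kills every coefficient except $P_{111}$. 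Two further defects in your write-up: you never actually carry out any coefficient analysis, only assert that invariance ``forces the monomial $ABC$''; and your claim that $\a+\b+\g$ is a mutation invariant is false --- along the orbit of~\eqref{InitGooD} the shadow sums are $2,6,18,\ldots$ --- the genuine invariant is the coefficient $\s$ in~\eqref{GenE}, i.e.\ Proposition~\ref{Refor}(i), which you cannot invoke for~\eqref{DefEq} before knowing the equation has the form~\eqref{GenE}.
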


\begin{proof}
Let us show that,
if~\eqref{DefEq} is preserved by the transformations~\eqref{SMarmut}, then all the coefficients
$P_{ijk}$ of the polynomial~$P$ vanish, except perhaps~$P_{111}$, which is arbitrary.
Indeed,~\eqref{SMarmut} can be rewritten
$$
A'=3BC-A+\frac{P(A,B,C)}{A}\e
$$
so that the left-hand-side of~\eqref{DefEq} after the transformation becomes
$$
\begin{array}{rcl}
{A'}^2+B^2+C^2&=&
9B^2C^2 - 6ABC+A^2+B^2+C^2
\\[4pt]
&&+\left(\frac{6BC}{A}P(A,B,C)-2P(A,B,C)\right)\e\\[4pt]
&&=9B^2C^2 - 3ABC+\left(\frac{6BC}{A}P(A,B,C)-P(A,B,C)\right)\e.
\end{array}
$$
The right-hand-side of~\eqref{DefEq} is
$$
3A'BC +P(A',B,C)\e=9B^2C^2-3ABC+\left(\frac{3BC}{A}P(A,B,C)+P(A',B,C)\right)\e.
$$
The assumption the equation is preserved by the mutations then reads
$$
P(A',B,C)\e=\left(\frac{3BC}{A}-1\right)P(A,B,C)\e,
$$
or with more details
$$
\sum_{i,j,k\geq0}P_{ijk}(3BC-A)^iB^jC^k=
\left(\frac{3BC}{A}-1\right)\sum_{i,j,k\geq0}P_{ijk}A^iB^jC^k.
$$
(Note that these terms appear with multiple~$\e$, so that we omit 
$\frac{P(A,B,C)}{A}\e$ in the left-hand-side since $\e^2=0$.)

The latter equation implies that $P_{ijk}=0$ for $i>1$, and similarly for $j$ and~$k>1$.
Indeed, taking the maximal~$i$, for which~$P_{ijk}\ne0$, if $i\geq2$,
all of the terms in the right-hand-side contain~$A$, while the left-hand-side contain the term
$P_{ijk}3^iB^{i+j}C^{i+k}$.
Therefore, the equation~\eqref{DefEq} cannot be invariant in this case.

The theorem follows.
\end{proof}

\begin{rem}
Another version of super-Markov equation was suggested
by Huang, Penner, and Zeitlin~\cite{HPZ}:
$$
a^2 +b^2 +c^2 +\left(ab+bc+ac\right)\e=3\left(1+\e\right)abc.
$$
For a discussion and a related conjecture; see~\cite{MOZ}.
This equation has interesting geometric properties, albeit it is not preserved by the mutations~\eqref{Marmut}.
\end{rem}

\section{Initial conditions}\label{ICSec}

Let us end the note with a brief discussion of the choice of initial triplet $\a_1,\b_1,\g_1$ in~\eqref{InitD}.
The choice of~\cite{Ovs}, which is $(\a_1,\b_1,\g_1)=(0,1,1)$ is not unique,
but can be justified by the following.

A simple property of the equation~\eqref{GenE} is
that the shadow parts of its solutions linearly depend on the
choice of the initial triplet $\a_1,\b_1,\g_1$.
This readily follows from the second equation in~\eqref{SMarmutBis}.
Therefore, the shadow part of every solution to~\eqref{GenE} is a linear combination of those
with one of the following initial triplets
$$
(0,1,1),
\qquad
(1,1,1),
\qquad
(0,1,0),
$$
that form a basis in the space of initial conditions.
However, choosing $(1,1,1)$, one obtains the shadow part that coincides with the Markov numbers,
namely the solutions of the form $(A,B,C)=(a+a\e,b+b\e,c+c\e)$, which is not an interesting case.
The choice $(0,1,0)$ produces negative integers.
As mentioned in the introduction,
it would be interesting to characterize the initial triplets that produce only positive shadow parts.

\medskip
 \noindent
{\bf Acknowledgements}.
We would like to thank
Philippe Larchev\^eque, Sophie Morier-Genoud, Julien Rouyer, Michael Shapiro, Sergei Tabachnikov, 
Alexey Ustinov, and Alexander Veselov for fruitful discussions.
We are indebted to the anonymous referee for many helpful remarks and suggestions.
This project was partially supported by the ANR project ANR-19-CE40-0021.


\begin{thebibliography}{99}

\bibitem{Aig}
M. Aigner, 
Markov's theorem and 100 years of the uniqueness conjecture. 
A mathematical journey from irrational numbers to perfect matchings. 
Springer, Cham, 2013. 

\bibitem{CO}
C. Conley, V. Ovsienko, 
{\it Shadows of rationals and irrationals: supersymmetric continued fractions and the super modular group},
arXiv:2209.10426.

\bibitem{FZ1}
S. Fomin, A. Zelevinsky, 
{\it Cluster algebras. I. Foundations. }
J. Amer. Math. Soc.  {\bf 15}  (2002),   497--529.

\bibitem{FZ}
S. Fomin, A. Zelevinsky, 
{\it The Laurent phenomenon},
Adv. in Appl. Math.  {\bf 28}  (2002),   119--144.

\bibitem{Hon}
A. Hone,
{\it Casting light on shadow Somos sequences},
arXiv:2111.10905.

\bibitem{HPZ}
Y. Huang, R. Penner, A. Zeitlin,
{\it Super McShane identity},
arXiv:1907.09978,  to appear in Journal of Differential Geometry.

\bibitem{Mar}
A. Markoff,
{\it Sur les formes quadratiques binaires ind\'efinies},
Mathematische Annalen volume {\bf 15} (1879), pages 381--406.

\bibitem{MOZ}
G. Musiker, N. Ovenhouse, S. Zhang,
{\it Double Dimer Covers on Snake Graphs from Super Cluster Expansions},
arXiv:2110.06497.

\bibitem{OEIS} 
The On-Line Encyclopedia of Integer Sequences, OEIS Foundation Inc., http://oeis.org.

\bibitem{Ovs} 
V. Ovsienko, 
{\it Shadow sequences of integers, from Fibonacci to Markov and back},
arXiv:2111.02553, to appear in Math. Intell.

\bibitem{OT} 
V. Ovsienko, S. Tabachnikov,
{\it Dual numbers, weighted quivers, and extended Somos and Gale-Robinson sequences},
Algebr. Represent. Theory {\bf 21} (2018), no. 5, 1119--1132.

\bibitem{OZ} 
V. Ovsienko, M. Shapiro,
{\it Cluster algebras with Grassmann variables},
 Electron. Res. Announc. Math. Sci. 26 (2019), 1--15.
 
\bibitem{Pro} 
J. Propp,
{\it  The combinatorics of frieze patterns and Markoff numbers},
Integers 20 (2020), Paper No. A12, 38 pp; arXiv:math/0511633.

\bibitem{Rab} 
J. Rabin,
{\it Super elliptic curves},
J. Geom. Phys. 15 (1995), no. 3, 252--280.

\bibitem{Ves} 
A. Veselov, 
{\it Conway's light on the shadow of Mordell},
arXiv:2208.14184.

\end{thebibliography}
\end{document}